\tikzstyle{vertex}=[circle,draw=black,fill=black,inner sep=0,minimum size=3pt,text=white,font=\footnotesize]
\date{}
\title{\vspace{-0.8cm}Colorings with only rainbow arithmetic progressions}
\author{
	J\'{a}nos Pach \thanks{R\'enyi Institute, Budapest. \emph{e-mail}: \textbf{pach@cims.nyu.edu}} \thanks{\'{E}cole Polytechnique F\'{e}d\'{e}rale de Lausanne. Research partially supported by Swiss National Science Foundation grants no. 200020-162884 and 200021-175977. }
	\and
	Istv\'{a}n Tomon \thanks{ETH Zurich. \emph{e-mail}: \textbf{istvan.tomon@math.ethz.ch}, Research supported by SNSF grant 200021-149111.}
}
\theoremstyle{plain}
\newtheorem{theorem}{Theorem}
\newtheorem{corollary}[theorem]{Corollary}
\newtheorem{claim}[theorem]{Claim}
\theoremstyle{definition}
\begin{document}
\maketitle

\begin{abstract}
If we want to color $1,2,\ldots,n$ with the property that all 3-term arithmetic progressions are {\em rainbow} (that is, their elements receive 3 distinct colors), then, obviously, we need to use at least $n/2$ colors. Surprisingly, much fewer colors suffice if we are allowed to leave a negligible proportion of integers uncolored. Specifically, we prove that there exist $\alpha,\beta<1$ such that for every $n$, there is a subset $A$ of $\{1,2,\ldots,n\}$ of size at least $n-n^{\alpha}$, the elements of which can be colored with $n^{\beta}$ colors with the property that every 3-term arithmetic progression in $A$ is rainbow. Moreover, $\beta$ can be chosen to be arbitrarily small. Our result can be easily extended to $k$-term arithmetic progressions for any $k\ge 3$.

As a corollary, we obtain the following result of Alon, Moitra, and Sudakov, which can be used to design efficient communication protocols over shared directional multi-channels. There exist $\alpha',\beta'<1$ such that for every $n$, there is a graph with $n$ vertices and at least $\binom{n}{2}-n^{1+\alpha'}$ edges, whose edge set can be partitioned into at most $n^{1+\beta'}$ {\em induced matchings}.
\end{abstract}

\begin{center}
	{\it Dedicated to the 80th birthday of Endre Szemer\'edi.}
\end{center}

\section{Introduction}
Szemer\'edi's regularity lemma \cite{Sz78} started a new chapter in extremal combinatorics and in additive number theory. In particular, it was instrumental in proving a famous conjecture of Erd\H os and Tur\'an, according to which, for every real number $\delta > 0$ and every integer $k>0$, there exists a positive integer $n=n(\delta,k)$ such that every subset of $[n]=\{1, 2,\ldots, n\}$ that has at least $\delta n$ elements contains an arithmetic progression of length $k$ (in short, a {\em $k$-AP}); see \cite{Sz75}. The $k=3$ special case of this theorem, originally proved by Roth~\cite{R52}, is equivalent to the {\em triangle removal lemma} of Ruzsa and Szemer\'edi~\cite{RSz78}, which is another direct consequence of the regularity lemma. It has several other equivalent formulations:

\begin{enumerate}
\item If $A$ is subset of $[n]$ with no $3$-AP, then $|A|=o(n)$.
\item If $G$ is a graph on $n$ vertices whose edge set can be partitioned into $n$ \emph{induced} matchings, then $|E(G)|=o(n^{2})$.

\item If $G$ is a graph on $n$ vertices which has $o(n^3)$ triangles, then one can eliminate all triangles by removing $o(n^2)$ edges of $G$.

\item If $H$ is a system of triples of $[n]$ such that every $6$-element subset of $[n]$ contains at most $2$ triples in $H$, then $|H|=o(n^2)$.
\end{enumerate}

\noindent More precisely, the above statements apply to any infinite series of sets $A$, graphs $G$, and triple systems $H$, resp., where $n\rightarrow\infty$.
\smallskip

An old construction of Behrend \cite{B46} shows that there are 3-AP-free sets $A\subset [n]$ of size at least $ne^{-O(\sqrt{\log n})}$, so that 1 is not far from being tight. Ruzsa and Szemer\'edi observed, that Behrend's construction can be used to show the existence of graphs $G$ with $n$ vertices and $|E(G)|\ge n^2e^{-O(\sqrt{\log n})}$ edges that can be partitioned into $n$ induced matchings. Hence, 2 is also nearly tight, and the same is true for 3 and 4.
\smallskip

Szemer\'edi's theorem on arithmetic progressions immediately implies van der Waerden's theorem \cite{vdW27}: For any integer $k\ge 3$, let $c_k(n)$ denote the minimum number of colors needed to color all elements of $[n]$ without creating a {\em monochromatic} $k$-AP. Then we have $\lim_{n\rightarrow\infty}c_k(n)=\infty$.
\smallskip

How many colors do we need if, instead of trying to avoid monochromatic $k$-term arithmetic progressions, we want to make sure that every $k$-term arithmetic progression is {\em rainbow}, that is, all of its elements receive distinct colors? For instance, it is easy to see that for $k=3$, we need at least $n/2$ colors. Surprisingly, it turns out that much fewer colors suffice if we do not insist on coloring {\em all} elements of $[n]$. In particular, there is a subset of $A\subset [n]$ with $|A|=(1-o(1))n$ whose elements can be colored by $n^{o(1)}$ colors with the property that all 3-term arithmetic progressions in $A$ are rainbow.

More precisely, we prove the following result.

\begin{theorem}\label{thm:mainthm}
	There exist $\alpha,\beta<1$ with the following property. For every sufficiently large positive integer $n$, there are a set $A\subset [n]$ with $|A|\geq n-n^{\alpha}$ and a coloring of $A$ with at most $n^{\beta}$ colors such that every 3-term arithmetic progression in $A$ is rainbow.

Moreover, for every $\beta>0$, we can choose $\alpha<1$ satisfying the above conditions.
\end{theorem}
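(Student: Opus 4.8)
The first thing I would do is translate the problem into graph colouring. Given $A\subseteq[n]$, form the \emph{conflict graph} $H_A$ on vertex set $A$, joining $x,y$ whenever $\{x,y\}$ lies in some $3$-AP contained in $A$ -- equivalently, whenever at least one of $2x-y$, $2y-x$, $\tfrac{x+y}{2}$ belongs to $A$. A colouring of $A$ with all $3$-APs rainbow is exactly a proper colouring of $H_A$, so the theorem asks for a set $A=[n]\setminus R$ with $|R|\le n^{\alpha}$ and $\chi(H_A)\le n^{\beta}$. Two observations steer the construction. First, every colour class is an independent set of $H_A$, hence a $3$-AP-free subset of $[n]$ all of whose pairwise midpoints (and one-sided extensions) fall inside $R$; counting midpoints of a fixed colour class forces $\chi(H_A)\gtrsim n/|R|$, so $\alpha+\beta\gtrsim 1$ is necessary and one should aim for $\alpha$ close to $1$ when $\beta$ is small. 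Second, a $3$-AP-free set whose midpoints occupy few values is essentially a Behrend-type set; so the colour classes will be affine copies of Behrend's construction, and $R$ must be built to absorb their midpoints and extensions.

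\textbf{A self-similar construction.} I would build the colouring by a recursive block decomposition. Fix a scale $L$ (roughly $n^{1-\beta}$, to be tuned), cut $[n]$ into about $n/L$ consecutive blocks of length $L$, and colour each element by the pair
\[
  \bigl(\, \theta(\text{block index}) \,,\, \psi(\text{position inside the block}) \,\bigr),
\]
where $\theta$ comes from applying the theorem recursively to $[n/L]$ and $\psi$ from applying it recursively to $[L]$, blocks with the same $\theta$-colour sharing the palette of $\psi$. For a $3$-AP $x,x+d,x+2d$ surviving inside $A$: if $d$ is large and ``carry-free'' for this block decomposition, its three block indices form a genuine $3$-AP of $[n/L]$ and $\theta$ already makes the triple rainbow; if $d$ is small and the progression stays inside one block, the three positions form a genuine $3$-AP of $[L]$ and $\psi$ finishes the job. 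The troublesome progressions are the \emph{boundary} ones: those whose three block indices are distinct but fail to form an exact $3$-AP because of a carry, and those with two terms in one block and the third in a neighbour.

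\textbf{Handling boundaries, solving the recursion, and the base case.} I would deal with the boundary progressions by (i) deleting the ``carry-causing'' elements -- those whose position in a block is in a bounded number of residue ranges depending on $d\bmod L$ -- into $R$, (ii) adding the resulting ``near-$3$-AP'' edges to the graphs that $\theta$ and $\psi$ must properly colour (these extra edges are themselves $3$-AP-like, so they inflate the needed palettes only by an $n^{o(1)}$ factor), and (iii) checking the bookkeeping. If this goes through, the parameters at level $n$ follow from those at levels $n^{1-\beta}$ and $n^{\beta}$: the number of colours multiplies across the two coordinates, while $R$ is (roughly) the union of $n/L$ translated copies of the within-block deleted set together with the deleted blocks, so the ``gap'' $1-\alpha$ grows by a bounded factor per level. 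One then iterates the step a bounded number of times, starting from a first nontrivial pair $(\alpha_0,\beta_0)$ with $\alpha_0,\beta_0<1$ obtained directly from a genuine Behrend-type (sphere/parabola) construction, and drives $\beta$ down to any prescribed value with $\alpha$ correspondingly close to $1$. The extension to $k$-APs with $k>3$ is the same argument with $k$-AP-free sets in place of $3$-AP-free ones.

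\textbf{The main difficulty.} The crux is steps (i)--(iii): the boundary progressions are genuinely numerous -- for differences $d$ with $d\bmod L$ near $L/2$, \emph{every} progression of difference $d$ is of this type -- so they cannot be killed simply by deleting a sparse set of integers. The construction works only because these bad configurations are highly structured: they are governed by $d\bmod L$ and by a bounded number of position-classes inside a block, so the additional constraints they force on $\theta$ and $\psi$ are again $3$-AP-like and can be folded into the recursion. Making this quantitative -- in particular, keeping the total number of deleted elements below $n^{\alpha}$ for a \emph{fixed} $\alpha<1$ while still eliminating every bad progression -- is the technical heart of the argument.
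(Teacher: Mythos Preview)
Your proposal has a genuine gap at the structural level, independent of the boundary bookkeeping you flag as the hard part.

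\textbf{The recursion does not decrease $\beta$.} Write $n=ML$ with $M=n/L$ blocks of length $L$, and colour by the pair $(\theta(\text{block}),\psi(\text{position}))$ as you describe. Even in the ideal carry-free case, the number of colours is exactly the product:
\[
M^{\beta_1}\cdot L^{\beta_2}\;=\;n^{\beta}\quad\text{with}\quad \beta=\frac{\beta_1\log M+\beta_2\log L}{\log n}.
\]
This is a convex combination of $\beta_1$ and $\beta_2$, so it never drops below $\min(\beta_1,\beta_2)$. In particular, starting from a base pair $(\alpha_0,\beta_0)$ and iterating gives $\beta=\beta_0$ at every level; the recursion cannot ``drive $\beta$ down'' as you claim. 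Any attempt to lower the count by collapsing one coordinate (say, making $\psi$ constant) leaves all within-block $3$-APs monochromatic, and killing those by deletion would remove a $1-o(1)$ fraction of each block, destroying the $|R|\le n^{\alpha}$ bound.

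\textbf{The base case is the whole theorem.} You invoke ``a genuine Behrend-type (sphere/parabola) construction'' to get a first pair $(\alpha_0,\beta_0)$ with both exponents below $1$, but Behrend gives a single $3$-AP-free set, not a colouring of almost all of $[n]$ whose classes are colour-safe. Producing any such pair is precisely the first sentence of the theorem, and your outline does not address it.

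\textbf{What the paper does instead.} The paper's argument is a direct one-shot construction, not a recursion. It identifies $[n]$ with the grid $\{0,\dots,C-1\}^d$ and takes $A$ to be the points in a thin spherical shell $\{(1-\epsilon)r\le |\mathbf{x}|\le(1+\epsilon)r\}$; Hoeffding's inequality shows this misses only $n^{\alpha}$ points. The carries are handled by an auxiliary per-coordinate colouring $f$ using $(O(\log C))^d$ colours --- this part is indeed in the spirit of your product idea. The key saving, which has no analogue in your proposal, is geometric: any $3$-AP lying entirely in the thin shell has diameter $O(\sqrt{\epsilon}\,r)$, so within each $f$-class the conflict graph $H_A$ has maximum degree at most $2^d C^{O(\epsilon C^2 d)}$, and a greedy colouring finishes. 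With $\epsilon=C^{-3}$ the total palette has size roughly $(O(\log C))^d=n^{\beta}$ with $\beta=O(\log\log C/\log C)$, so taking $C$ large gives $\beta$ arbitrarily small directly --- no recursion is needed or used.
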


Theorem~\ref{thm:mainthm} can be used to construct graphs with $n$ vertices and $(1-o(1)){n\choose 2}$ edges which can be partitioned into a small number of induced matchings. The first such constructions were found by Alon, Moitra, and Sudakov \cite{AMS13}.

\begin{corollary}\label{cor:maincor}
  There exist $\alpha',\beta'<1$ with the following property. For every sufficiently large positive integer $n$, there is a graph with $n$ vertices and at least $\binom{n}{2}-n^{1+\alpha'}$ edges that can be partitioned into $n^{1+\beta'}$ induced matchings.

Moreover, for every $\beta'>0$, we can choose $\alpha'<1$ satisfying the above condition.
\end{corollary}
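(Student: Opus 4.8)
The idea is to turn the rainbow colouring supplied by Theorem~\ref{thm:mainthm} into the graph of Corollary~\ref{cor:maincor} by a construction in the style of Ruzsa and Szemer\'edi, with the $3$-AP-free set replaced by the colouring and the vertex set spread over a growing number of parts so that the graph becomes nearly complete. Fix a parameter $\gamma>0$ (to be chosen at the very end), and apply Theorem~\ref{thm:mainthm} with a small $\beta>0$ to obtain, for all large $N$, a set $A\subseteq[N]$ with $|A|\ge N-N^{\alpha}$ and a colouring $\phi\colon A\to[m]$, $m\le N^{\beta}$, in which every $3$-term arithmetic progression of $A$ is rainbow. Put $r=\lceil N^{\gamma}\rceil$, let $V_1,\dots,V_r$ be disjoint copies of $\{1,\dots,N\}$, and let $G$ be the graph on $n:=rN$ vertices $V_1\cup\dots\cup V_r$ having no edges inside any $V_i$ and, for $i\ne j$, an edge between $x\in V_i$ and $y\in V_j$ precisely when $|x-y|\in A$.

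The density bound is routine. Inside the parts there are $r\binom N2$ non-edges; between two parts the non-edges are the pairs $\{x,y\}$ with $|x-y|\notin A$, and since each difference occurs in at most $2N$ such pairs and at most $N^{\alpha}$ differences are forbidden, there are at most $2N^{1+\alpha}$ of them. Hence $\binom n2-|E(G)|\le r\binom N2+\binom r2\cdot 2N^{1+\alpha}=O(N^{2+\gamma}+N^{1+2\gamma+\alpha})$, which is below $n^{1+\alpha'}=N^{(1+\gamma)(1+\alpha')}$ as soon as $\alpha'<1$ is chosen above $\max\{\tfrac1{1+\gamma},\tfrac{\alpha+\gamma}{1+\gamma}\}$; both fractions are $<1$ because $\gamma>0$ and $\alpha<1$.

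Next I would partition $E(G)$. For each $i<j$, each $t\in\{2,\dots,2N\}$, each colour $c\in[m]$, and each sign $\sigma\in\{+,-\}$, let $M_{i,j,t,c,\sigma}$ be the set of edges $\{x,y\}$ with $x\in V_i$, $y\in V_j$, $x+y=t$, $\sigma\cdot(y-x)>0$, and $\phi(|y-x|)=c$. These classes partition $E(G)$, and each is a matching, since within a class the value $t$ recovers one endpoint of an edge from the other. The heart of the argument — and the only place the rainbow hypothesis enters — is that $M_{i,j,t,c,\sigma}$ is an \emph{induced} matching. Take two distinct edges $\{x_1,y_1\},\{x_2,y_2\}$ of the class, say with $\sigma=+$, so $b_k:=y_k-x_k\in A$, $\phi(b_k)=c$, and $b_1\ne b_2$ (otherwise the two edges coincide). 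Of the four pairs joining them, $\{x_1,x_2\}$ and $\{y_1,y_2\}$ lie inside a single part and so are non-edges; the other two, $\{x_1,y_2\}$ and $\{x_2,y_1\}$, satisfy $|x_1-y_2|=|x_2-y_1|=t-x_1-x_2=\tfrac12(b_1+b_2)$, a positive integer at most $N-1$. If $\tfrac12(b_1+b_2)$ lay in $A$, then $b_1,\tfrac12(b_1+b_2),b_2$ would be a genuine (nondegenerate, integral, since $b_1+b_2$ is even) $3$-term arithmetic progression contained in $A$ with the same colour $c$ at both ends — contradicting that all $3$-APs of $A$ are rainbow. So those two pairs are non-edges too, and the class is induced; the case $\sigma=-$ is identical with $b_k$ replaced by $|b_k|$. (Splitting by $\sigma$ forces $b_1$ and $b_2$ to have the same sign, so that no cancellation occurs in $b_1+b_2$; without it the argument breaks, and this is the one delicate point.)

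Finally, the number of classes is at most $\binom r2\cdot 2N\cdot m\cdot 2=O(N^{1+2\gamma+\beta})$, which is below $n^{1+\beta'}=N^{(1+\gamma)(1+\beta')}$ whenever $\beta'>\tfrac{\beta+\gamma}{1+\gamma}$, a quantity that is $<1$ since $\beta<1$. For the quantitative claim, given $\beta'>0$ take $\gamma=\beta'/2$ and apply Theorem~\ref{thm:mainthm} with $\beta=\beta'/4$; then $\tfrac{\beta+\gamma}{1+\gamma}<\beta+\gamma=\tfrac34\beta'<\beta'$, and $\alpha'<1$ is picked as above. To cover all large $n$, not just those of the form $\lceil N^{\gamma}\rceil N$, run the construction for the largest admissible $n'\le n$ (so $n-n'=O(N^{\gamma})=o(n)$) and append $n-n'$ isolated vertices; this adds at most $(n-n')n=O(n^{1+\gamma/(1+\gamma)})\le n^{1+\alpha'}$ extra non-edges and no new matchings. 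The main obstacle throughout is the tension between making $G$ nearly complete (which forces a sparse complement) and still decomposing it into few induced matchings (which wants many non-edges around each matching); using $N^{\gamma}$ parts and refining the natural ``sum'' matchings by the colour classes of $\phi$ is what reconciles the two, with the rainbow property doing its work exactly in the verification above that no connecting edge can occur.
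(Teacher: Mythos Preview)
Your proof is correct and follows essentially the same construction as the paper: a multipartite graph with adjacency governed by $A$ and matchings indexed by (pair of parts, one linear combination of the endpoints, colour of the other). The differences are cosmetic---the paper takes $m$ parts of size $m$ (your $\gamma=1$), uses $x+y\in A$ for adjacency and $(i,j,x-y,c(x+y))$ as the edge colour (so the roles of sum and difference are swapped relative to yours, and no sign coordinate $\sigma$ is needed), with the blocking $3$-AP appearing as $x+y,\,x+v,\,u+v$ rather than your $b_1,\tfrac12(b_1+b_2),b_2$.
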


Dense graphs that can be partitioned into few induced matchings have been extensively studied, partially due to their applications in graph testing \cite{A02,AS03,AS04,HW03} and testing monotonicity in posets \cite{FNRRS02}. The graphs satisfying the conditions in Corollary \ref{cor:maincor} can be used to design efficient communication protocols over shared directional multi-channels \cite{BLM,AMS13}. Some other interesting graphs decomposable into large matchings were constructed and studied in \cite{FHS17}.
\smallskip

Our proof of Theorem \ref{thm:mainthm} is inspired by the construction of Behrend \cite{B46}, but it also has a lot in common with one of the constructions of Alon, Moitra, and Sudakov \cite{AMS13}. Roughly, the idea of Behrend is to identify the elements of $[n]$ with a high dimensional grid $[C]^{d}$, in which we find a sphere passing through many grid points. These points will correspond to a dense 3-AP-free set in $[n]$. We proceed similarly, but instead of taking a sphere, we take a small neighborhood $S$ of a sphere. If we choose the radii properly, it follows by standard concentration laws that almost all points of the grid $[C]^{d}$ are contained in $S$. On the other hand if 3 points form a 3-AP in $S$, then they must be close to each other. This observation can be explored to give a coloring of $S\cap [C]^{d}$ with the desired properties.
\smallskip

In Section 2 and 3, we prove Theorem~\ref{thm:mainthm} and Corollary~\ref{cor:maincor}, respectively. In the last section, we indicate how to extend Theorem~\ref{thm:mainthm} to $k$-term arithmetic progressions for any $k\ge 3$; see Theorem~\ref{general}.

\section{Rainbow 3-AP's---Proof of Theorem \ref{thm:mainthm}}

We start by setting a few parameters. Let $C$ be a sufficiently large integer. Suppose for simplicity that $n=C^d$ for some integer $d$. The general case can be treated in a similar manner. In the sequel, $\log$ will stand for the natural base logarithm.

Set $\epsilon=\frac{1}{C^3}$ and let $B=\{0,1,\dots,C-1\}^d$, so that $|B|=C^d$. For any $\mathbf{x}\in B$, let $\mathbf{x}(i)\in\{0,1,\ldots,C-1\}$ denote the $i$th coordinate of $\mathbf{x}$, where $1\le i\le d$. Clearly, the map $\phi: B\rightarrow [n]$ defined as $$\phi(\mathbf{x})=1+\sum_{i=1}^{d}\mathbf{x}(i)C^{i-1}$$ is a bijection.

Let $\mathbf{z}$ be an element chosen uniformly at random from the set $B$, and let $r=(\mathbb{E}[|\mathbf{z}|^{2}])^{1/2}$. We have
$$r^2=\mathbb{E}[|\mathbf{z}|^{2}]=\sum_{i=1}^{d}\mathbb{E}[\mathbf{z}(i)^{2}]=\frac{d(C-1)(2C-1)}{6}\geq \frac{dC^{2}}{6}.$$

Let $A'$ consist of the set of all points in $B$ that lie in the spherical shell between the spheres of radii $r(1-\epsilon)$ and $r(1+\epsilon)$ about the origin. That is, let $$S=\{\mathbf{x}:r(1-\epsilon)\leq |\mathbf{x}|\leq r(1+\epsilon)\},$$
and let $A'=B\cap S$. Finally, set $A=\phi(A')$. Next we show, using standard concentration laws, that $A'$ contains almost all elements of $B$ and, hence, $A$ contains almost all elements of $[n]$.

\begin{claim}\label{harom}
	$|A|=|A'|\geq C^{d}(1-2e^{-\frac{1}{18}d\epsilon^{2}})=n-2n^{1-\frac{\epsilon^{2}}{18\log C}}$.
\end{claim}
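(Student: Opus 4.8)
The plan is to interpret $|A'|$ probabilistically and apply a standard tail bound. Since $\phi$ is a bijection, $|A|=|A'|$, and $|A'|=|B\cap S|=C^{d}\cdot\Pr[\mathbf{z}\in S]$, where $\mathbf{z}$ is the uniformly random element of $B$ fixed above. Thus it suffices to prove $\Pr[\mathbf{z}\notin S]\le 2e^{-d\epsilon^{2}/18}$ and then rewrite this bound using $n=C^{d}$.

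First I would pass from the shell condition on $|\mathbf{z}|$ to a deviation condition on $|\mathbf{z}|^{2}$, which is the quantity we actually control. Using $1-\epsilon\le\sqrt{1-\epsilon}$ and $\sqrt{1+\epsilon}\le 1+\epsilon$ (both valid for $0\le\epsilon\le 1$), the event $\bigl|\,|\mathbf{z}|^{2}-r^{2}\,\bigr|\le\epsilon r^{2}$ implies $r(1-\epsilon)\le|\mathbf{z}|\le r(1+\epsilon)$, i.e. $\mathbf{z}\in S$. Hence $\Pr[\mathbf{z}\notin S]\le\Pr\bigl[\,\bigl|\,|\mathbf{z}|^{2}-r^{2}\,\bigr|>\epsilon r^{2}\,\bigr]$.

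Next, observe that $|\mathbf{z}|^{2}=\sum_{i=1}^{d}\mathbf{z}(i)^{2}$ is a sum of $d$ independent random variables, each taking values in $[0,(C-1)^{2}]$, with mean $r^{2}=\mathbb{E}[|\mathbf{z}|^{2}]$. Hoeffding's inequality then gives $\Pr\bigl[\,\bigl|\,|\mathbf{z}|^{2}-r^{2}\,\bigr|>t\,\bigr]\le 2\exp\!\bigl(-2t^{2}/(d(C-1)^{4})\bigr)$. Taking $t=\epsilon r^{2}$ and using $r^{2}\ge dC^{2}/6$ together with $(C-1)^{4}\le C^{4}$, the exponent is at least $2\epsilon^{2}(dC^{2}/6)^{2}/(dC^{4})=\epsilon^{2}d/18$, so $\Pr[\mathbf{z}\notin S]\le 2e^{-\epsilon^{2}d/18}$. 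Finally, since $n=C^{d}$ we have $d=\log n/\log C$, hence $e^{-\epsilon^{2}d/18}=n^{-\epsilon^{2}/(18\log C)}$, and multiplying the probability bound by $C^{d}=n$ yields $|A|=|A'|\ge C^{d}(1-2e^{-\epsilon^{2}d/18})=n-2n^{1-\epsilon^{2}/(18\log C)}$.

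No step here is a genuine obstacle; the only points requiring a little care are the comparison between the linear shell $r(1-\epsilon)\le|\mathbf{x}|\le r(1+\epsilon)$ and the quadratic deviation of $|\mathbf{z}|^{2}$, so that one loses only constant factors and the stated constant $1/18$ survives, and checking that the form of Hoeffding's inequality invoked really produces the factor $2$ in the exponent with the range $d(C-1)^{4}$ in the denominator. An alternative to Hoeffding would be a direct Chernoff-type estimate for the bounded variables $\mathbf{z}(i)^{2}$; Chebyshev's inequality would not suffice, as it yields only polynomial, not exponential, decay.
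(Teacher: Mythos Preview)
Your argument is correct and follows essentially the same route as the paper: reduce $\mathbf{z}\notin S$ to the deviation event $\bigl|\,|\mathbf{z}|^{2}-r^{2}\,\bigr|>\epsilon r^{2}$, apply Hoeffding's inequality to the sum $|\mathbf{z}|^{2}=\sum_i \mathbf{z}(i)^2$ of bounded independent variables, and plug in $r^{2}\ge dC^{2}/6$ to extract the constant $1/18$. The paper substitutes the lower bound $\epsilon r^{2}\ge \tfrac{1}{6}\epsilon dC^{2}$ before invoking Hoeffding rather than after, but the computation is the same.
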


\begin{proof}
	Note that $|\mathbf{z}|^{2}=\sum_{i=1}^{d}\mathbf{z}(i)^{2}$ is the sum of $d$ independent random variables taking values in $\{0,\dots,(C-1)^{2}\}$.  We have $r^{2}=\mathbb{E}[|\mathbf{z}|^{2}]\leq C^{2}d$. On the other hand, if $\mathbf{x}\not\in A'$, then $||\mathbf{x}|^{2}-r^{2}|>\epsilon r^{2}> (1/6)\epsilon dC^{2}$. Thus, by Hoeffding's inequality \cite{H63}, we obtain
	$$1-\frac{|A'|}{C^{d}}\leq \mathbb{P}\left[||\mathbf{z}|^{2}-r^{2}|> (1/6)\epsilon dC^{2}\right]\leq 2e^{-\frac{1}{18}d\epsilon^{2}}=2n^{1-\frac{\epsilon^{2}}{18\log C}}.$$
\end{proof}

Therefore, with the choice $\alpha=1-\frac{\epsilon^{2}}{30\log C}$, we have $|A|\geq n-n^{\alpha}$, provided that $n$ is sufficiently large.
\smallskip

It remains to define a coloring $c$ of $A$ with the desired properties. Using the bijection $\phi$ between $B$ and $[n]$, this corresponds to a coloring of $A'\subset B$. We would like to guarantee that for every $a,b\in A$ with $a\neq b$ and $c(a)=c(b)$, we have $\frac{a+b}{2} \mbox{ and } 2a-b\not\in A$. (By swapping $a$ and $b$, the latter condition also implies that $2b-a\not\in A$.) Equivalently, we want to have
$$\phi^{-1}\left(\frac{a+b}{2}\right)\;\;\mbox{ and}\;\;\; \phi^{-1}(2a-b)\not\in A'.$$
To achieve this, we would like to use the identities $$\phi^{-1}\left(\frac{a+b}{2}\right)=\frac{\phi^{-1}(a)+\phi^{-1}(b)}{2}\;\; \mbox{ and}\;\;\; \phi^{-1}(2a-b)=2\phi^{-1}(a)-\phi^{-1}(b).$$ However, these equations hold if and only if
$$\frac{\phi^{-1}(a)+\phi^{-1}(b)}{2}\in B\;\; \mbox{ and}\;\;\; 2\phi^{-1}(a)-\phi^{-1}(b)\in B,$$ respectively.
\smallskip

To overcome this problem, we first give an auxiliary coloring $f$ of $B$ such that if $f(\mathbf{x})=f(\mathbf{y})$, then
$$\frac{\mathbf{x}+\mathbf{y}}{2}\;\; \mbox{ and}\;\;\;2\mathbf{x}-\mathbf{y}\in B.$$ We define $f$ as follows. For any $\mathbf{x}\in B$, let $f(\mathbf{x})=(a_1,\dots,a_d,b_1,\dots,b_d)$, where, for every $i\in [d]$, we have
$$a_{i}=\begin{cases}
0 &\mbox{if } \mathbf{x}(i)\mbox{ is even},\\
1 &\mbox{if } \mathbf{x}(i)\mbox{ is odd}.
\end{cases}$$	
 and
 $$b_{i}=\begin{cases}
 k &\mbox{if }\, \mathbf{x}(i)\leq \frac{C}{2}\mbox{ and }\, 2^{k-1}-1\leq \mathbf{x}(i)< 2^{k}-1,\\
 -k &\mbox{if }\, \mathbf{x}(i)>\frac{C}{2}\mbox{ and }\, 2^{k-1}-1\leq C-1-\mathbf{x}(i)< 2^{k}-1.
 \end{cases}$$
Then $f$ uses at most $2^d(2\log_2 C)^d$ colors, and it is easy to verify that $f$ satisfies the desired properties.
\smallskip

Next, we color each color class of $f$ with a separate set of colors such that any triple of (collinear) points in $A'$ that corresponds to a $3$-AP in $A$ is rainbow. (It will cause no confusion if such a point triple will also be called a $3$-AP.) In order to achieve this, we need a simple geometric observation; see Figure \ref{figure1}.

\begin{claim}\label{claim:circle}
  If $\mathbf{x},\mathbf{y},\mathbf{z}\in S$ such that $\mathbf{y}=\frac{\mathbf{x}+\mathbf{z}}{2}$, then $|\mathbf{x}-\mathbf{z}|\leq 4\sqrt{\epsilon}r.$
\end{claim}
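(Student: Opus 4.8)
The plan is to use the parallelogram law. Since $\mathbf{y}$ is the midpoint of $\mathbf{x}$ and $\mathbf{z}$, the identity
$$|\mathbf{x}-\mathbf{z}|^{2} = 2|\mathbf{x}|^{2} + 2|\mathbf{z}|^{2} - 4|\mathbf{y}|^{2}$$
holds (this is just $|\mathbf{x}-\mathbf{z}|^2 = |(\mathbf{x}-\mathbf{y})-(\mathbf{z}-\mathbf{y})|^2$ combined with $\mathbf{x}-\mathbf{y} = -(\mathbf{z}-\mathbf{y})$, equivalently the parallelogram identity applied to the vectors $\mathbf{x}-\mathbf{y}$ and $\mathbf{y}-\mathbf{z}$). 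Now I would substitute the bounds coming from membership in the shell $S$: each of $|\mathbf{x}|, |\mathbf{z}|$ is at most $r(1+\epsilon)$, and $|\mathbf{y}|$ is at least $r(1-\epsilon)$. Plugging in,
$$|\mathbf{x}-\mathbf{z}|^{2} \le 2r^{2}(1+\epsilon)^{2} + 2r^{2}(1+\epsilon)^{2} - 4r^{2}(1-\epsilon)^{2} = 4r^{2}\big((1+\epsilon)^{2} - (1-\epsilon)^{2}\big) = 16\epsilon r^{2}.$$
Taking square roots gives $|\mathbf{x}-\mathbf{z}| \le 4\sqrt{\epsilon}\, r$, which is exactly the claim.

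The only point requiring a word of care is the rewriting $|\mathbf{x}-\mathbf{z}|^{2} = 2|\mathbf{x}|^{2} + 2|\mathbf{z}|^{2} - 4|\mathbf{y}|^{2}$: expand $2|\mathbf{y}|^2 = \tfrac12 |\mathbf{x}+\mathbf{z}|^2 = \tfrac12(|\mathbf{x}|^2 + 2\langle \mathbf{x},\mathbf{z}\rangle + |\mathbf{z}|^2)$, so $4|\mathbf{y}|^2 = |\mathbf{x}|^2 + 2\langle \mathbf{x},\mathbf{z}\rangle + |\mathbf{z}|^2$, and then $2|\mathbf{x}|^2 + 2|\mathbf{z}|^2 - 4|\mathbf{y}|^2 = |\mathbf{x}|^2 - 2\langle \mathbf{x},\mathbf{z}\rangle + |\mathbf{z}|^2 = |\mathbf{x}-\mathbf{z}|^2$. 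There is no real obstacle here; the statement is essentially a one-line computation, and the geometric content is simply that a chord of a thin spherical shell whose midpoint also lies in the shell cannot be long. I would also remark that no integrality or grid structure is used — the claim holds for arbitrary points of $S$ in $\mathbb{R}^d$ — since that is all that is needed in the subsequent argument.
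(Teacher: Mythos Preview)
Your proof is correct. The approach differs slightly from the paper's: the paper observes that one of the two angles $\mathbf{0}\mathbf{y}\mathbf{x}$, $\mathbf{0}\mathbf{y}\mathbf{z}$ is at least $\pi/2$, and from that obtuse angle deduces $|\mathbf{y}|^{2}+|\mathbf{y}-\mathbf{x}|^{2}\le |\mathbf{x}|^{2}$ (up to relabeling), hence $|\mathbf{x}-\mathbf{z}|^{2}=4|\mathbf{y}-\mathbf{x}|^{2}\le 4(|\mathbf{x}|^{2}-|\mathbf{y}|^{2})\le 16\epsilon r^{2}$. You instead invoke the parallelogram identity $|\mathbf{x}-\mathbf{z}|^{2}=2|\mathbf{x}|^{2}+2|\mathbf{z}|^{2}-4|\mathbf{y}|^{2}$ and substitute the shell bounds directly. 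Both arrive at the same inequality $|\mathbf{x}-\mathbf{z}|^{2}\le 16\epsilon r^{2}$; your route is marginally cleaner in that it is a single identity with no case distinction or WLOG step, while the paper's argument is a bit more geometric in flavor. Either way the content is the same one-line estimate.
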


\begin{proof}
  At least one of the angles $\mathbf{0}\mathbf{y}\mathbf{x}$ and $\mathbf{0}\mathbf{y}\mathbf{z}$ is at least $\frac{\pi}{2}$, see Fig.~1. Assume without loss of generality that $\mathbf{0}\mathbf{y}\mathbf{z}$ is such an angle. Then we have $|\mathbf{y}|^2+|\mathbf{y}-\mathbf{x}|^2\leq |\mathbf{x}|^2.$ On the other hand, $|\mathbf{x}|^2\leq (1+\epsilon)^2r^2$ and $|\mathbf{y}|^2\geq (1-\epsilon)^{2}r^2$, so that we obtain $$|\mathbf{x}-\mathbf{z}|^2=4|\mathbf{y}-\mathbf{x}|^2\le 4(|\mathbf{x}|^2-|\mathbf{y}|^2)\leq 16\epsilon r^{2}.$$
\end{proof}

 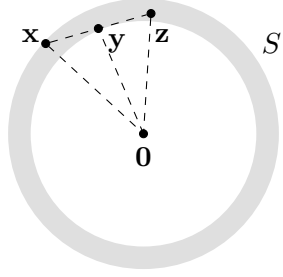
\begin{figure}[t]
	\begin{center}
		\begin{tikzpicture}[scale=1]	
		
		\fill[gray!25,even odd rule] (0,0) circle (1.8) (0,0) circle (1.5); 
		\node at (1.7,1.2) {$S$} ;
		
		
		\node[vertex] at (0,0) {}; \node at (0,-0.3) {$\mathbf{0}$} ;
		
		\node[vertex] (y) at (-0.6,1.4) {} ; \node at (-0.35,1.2) {$\mathbf{y}$} ;
		\node[vertex] (x) at (-1.3,1.2) {} ; \node at (-1.5,1.3) {$\mathbf{x}$} ;
		\node[vertex] (z) at (0.1,1.6) {} ; \node at (0.25,1.3) {$\mathbf{z}$} ;
		\draw[dashed] (x) -- (z) -- (0,0) -- (x) ; \draw[dashed] (0,0) -- (y) ;
		\end{tikzpicture}
		\caption{An illustration for Claim \ref{claim:circle}.}
		\label{figure1}
	\end{center}
\end{figure}

Fix any color class $V$ of $f$, and define a graph $G$ on the vertex set $V\cap A'$, as follows. Join $\mathbf{x},\mathbf{y}\in V\cap A'$ by an edge if at least one of the $3$ vectors $2\mathbf{x}-\mathbf{y},\frac{\mathbf{x}+\mathbf{y}}{2},2\mathbf{y}-\mathbf{x}$ belongs to $A'$.

\begin{claim}\label{otodik}
	Let $\Delta$ denote the maximum degree of the vertices of $G$. Then we have $$\Delta<2^d C^{16\epsilon dC^2}.$$
\end{claim}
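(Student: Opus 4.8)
The plan is to bound $\Delta$ by counting, for a fixed vertex $\mathbf{x}\in V\cap A'$, how many vertices $\mathbf{y}\in V\cap A'$ can be joined to it. By definition of the edge set, if $\mathbf{x}\mathbf{y}$ is an edge then one of $2\mathbf{x}-\mathbf{y}$, $\frac{\mathbf{x}+\mathbf{y}}{2}$, $2\mathbf{y}-\mathbf{x}$ lies in $A'=B\cap S$. In each of these three cases, both endpoints of the relevant triple lie in $S$, so Claim~\ref{claim:circle} applies (in the middle case directly; in the two outer cases after relabelling which point is the midpoint), and it forces $|\mathbf{x}-\mathbf{y}|\le C_0\sqrt{\epsilon}\,r$ for an absolute constant $C_0$ (at most $8$, say). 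Thus every neighbour of $\mathbf{x}$ lies inside a ball of radius $O(\sqrt{\epsilon}\,r)$ centred at $\mathbf{x}$.

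First I would record the size estimate for such a ball. Since $\mathbf{x}$ and $\mathbf{y}$ are lattice points in $B\subset\mathbb{Z}^d$ with $|\mathbf{x}-\mathbf{y}|^2\le C_0^2\epsilon r^2$, and $r^2\le C^2 d$, the number of integer vectors $\mathbf{v}$ with $|\mathbf{v}|^2\le C_0^2\epsilon C^2 d$ can be bounded crudely: the squared length is a sum of $d$ nonnegative integer squares summing to at most $R^2:=C_0^2\epsilon C^2 d$, and with $\epsilon=C^{-3}$ this is $R^2= C_0^2 d/C$, which is much smaller than $d$ for large $C$. Hence at most $R^2\le 16\epsilon C^2 d$ coordinates of $\mathbf{v}$ are nonzero (indeed each nonzero coordinate contributes at least $1$), so the support of $\mathbf{v}$ is one of at most $\binom{d}{R^2}\le d^{R^2}\le C^{d\cdot 16\epsilon C^2}$ — wait, more carefully: $\binom{d}{R^2}\le d^{R^2}$ and $d\le C^d$, giving $d^{R^2}\le C^{d R^2}= C^{16\epsilon d C^2 \cdot (\text{const})}$; and on each coordinate in the support the value lies in a bounded range, contributing a further factor at most $(2R+1)^{R^2}$ or, even more wastefully, $C^{R^2}$. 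Collecting, the count of candidate difference vectors is at most $2^d C^{16\epsilon d C^2}$ once the constants are absorbed into the exponent (this is where I would be a little careful to make the final exponent come out as stated rather than merely $O(\epsilon d C^2)$; choosing $C_0=4$ from Claim~\ref{claim:circle} and being slightly generous gives room to spare).

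Then $\Delta$ is at most the number of such difference vectors $\mathbf{v}=\mathbf{y}-\mathbf{x}$, which is at most $2^d C^{16\epsilon d C^2}$, as claimed. The main obstacle is purely bookkeeping: making the lattice-point count in a small ball match the exact form $2^d C^{16\epsilon dC^2}$, in particular verifying that choosing the support of $\mathbf{v}$ ($\binom{d}{R^2}$ ways) together with the values on that support ($C$ choices per coordinate suffices) multiplies out to at most $2^d C^{16\epsilon dC^2}$ after using $r^2\le C^2 d$ and $\epsilon = C^{-3}$; the factor $2^d$ comfortably swallows any $\binom{d}{R^2}$-type overhead since $R^2\ll d$. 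The geometry (Claim~\ref{claim:circle}) does all the real work; everything else is estimating $|\{\mathbf{v}\in\mathbb{Z}^d: |\mathbf{v}|\le 4\sqrt{\epsilon}r\}|$.
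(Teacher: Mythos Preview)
Your approach is the paper's approach: invoke Claim~\ref{claim:circle} to get $|\mathbf{x}-\mathbf{y}|\le 4\sqrt{\epsilon}\,r$, deduce that at most $16\epsilon dC^2$ coordinates of $\mathbf{x}-\mathbf{y}$ are nonzero (since $r^2\le dC^2$ and each nonzero coordinate contributes at least $1$ to $|\mathbf{x}-\mathbf{y}|^2$), and then count: at most $2^d$ choices for the support and at most $C$ choices for $\mathbf{y}(i)$ on each support coordinate.

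One clean-up: your exploratory detour through $\binom{d}{R^2}\le d^{R^2}\le C^{dR^2}$ does \emph{not} yield the stated bound --- that exponent is $16\epsilon d^2 C^2$, with an extra factor of $d$ --- so you should discard it rather than leave it in as an alternative. The argument you settle on at the end, namely $\binom{d}{R^2}\le 2^d$, is exactly what the paper does and is what makes the constant come out as written. Also, you can take $C_0=4$ in all three edge cases (in the outer cases $\mathbf{x}$ or $\mathbf{y}$ is the midpoint and Claim~\ref{claim:circle} actually gives $|\mathbf{x}-\mathbf{y}|\le 2\sqrt{\epsilon}\,r$), so no slack is needed there.
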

\begin{proof}
Fix any $\mathbf{x}\in A'$. By Claim \ref{claim:circle}, every neighbor of $\mathbf{x}$ is at distance at most $4\sqrt{\epsilon}r<4\sqrt{\epsilon} \sqrt{d}C$ from $\mathbf{x}$. If $|\mathbf{x}-\mathbf{y}|\leq 4\sqrt{\epsilon}\sqrt{d}C$ for some $\mathbf{y}\in A'$, then there are at most $16\epsilon d C^2$ indices $i\in [d]$ such that $\mathbf{x}(i)\neq\mathbf{y}(i)$. The number of vertices $\mathbf{y}$ with this property is smaller than $2^{d}C^{16\epsilon dC^2}.$ Indeed, there are fewer than $2^{d}$ ways to choose the indices $i$ for which  $\mathbf{x}(i)\neq\mathbf{y}(i)$ and, for each such index $i$, there are fewer than $C$ different choices for $\mathbf{y}(i)$. Therefore, we have
  $$\Delta< 2^{d}C^{16\epsilon dC^2}.$$
\end{proof}

It follows from Claim~\ref{otodik} that $G$ has a proper coloring with at most $\Delta+1$ colors. By the definition of $G$, if in such a coloring two elements are colored with the same color, then this pair is not contained in any $3$-AP in $A'$.

Repeating the same procedure for each color class $V$ of $f$, using a different set of colors every time, we obtain a coloring $c'$ of $A'$ with at most
$$(\Delta+1)2^d(2\log_2 C)^d\leq(10C^{16\epsilon C^2}\log_2 C)^d$$ colors such that every $3$-AP in $A'$ is rainbow. The coloring $c$ on $A$ induced by $c'$ has the same property.

Using that $\epsilon=\frac{1}{C^3}$, we have $D=10C^{16\epsilon C^2}\log_{2}C<C$, provided that $C$ is sufficiently large. Letting $\beta=\log_{C}D$, the number of colors used by $c$ is at most $n^{\beta}$.

Increasing $C$, $\beta$ tends to zero. Thus, in view of Claim~\ref{harom}, we obtain that for every $\beta>0$, there is a suitable positive $\alpha<1$ which satisfies the conditions of Theorem~\ref{thm:mainthm}.  \hfill$\Box$

\section{Induced matchings---Proof of Corollary \ref{cor:maincor}}

For simplicity, we only prove the corollary when $n$ is a perfect square, that is, $n=m^2$ for some integer $m$. Let $V$ be a set of size $n$, and partition it into $m$ sets $V_1,\dots,V_m$ of size $m$. Let $\alpha,\beta<1$ denote two constants meeting the requirements of  Theorem~\ref{thm:mainthm}. We will show that Corollary \ref{cor:maincor} is true with suitable constants $\alpha'=\frac{\alpha+1}{2}+o(1)$ and $\beta'=\frac{\beta+1}{2}+o(1)$, as $n\rightarrow\infty$.
\smallskip

Let $A\subset [2m]$ be a set of size at least $2m-(2m)^{\alpha}$, and let $c$ be a coloring of $A$ with at most $(2m)^{\beta}$ colors such that every 3-AP in $A$ is rainbow.

Construct a graph $G$ on the vertex set $V$, as follows. Identify each $V_i$ with the set $[m]$ and, for every $1\leq i<j\leq m$ and $x\in V_i, y\in V_j$, connect $x$ and $y$ by an edge of $G$ if and only if $x+y\in A$. If $xy$ is an edge, color it with the color $$c'(xy)=(i,j,x-y,c(x+y)).$$
Note that the same symbol $x$ denotes a different vertex in each $V_i$. Also, the third coordinate of the color $c'(xy)$ can be negative, zero, or positive. 

First, we show that each color class is an induced matching. In other words, we show that if $xy\neq uv$ are distinct edges of $G$ such that $c'(xy)=c'(uv)=c'$, then $xy$ and $uv$ do not share a vertex and none of $xu,xv,yu,yv$ can be an edge of $G$ having color $c'$. The first two coordinates of the color $c'(xy)=c'(uv)=c'$ determine the pair of indices $(i,j), i<j,$ such that both $xy$ and $uv$ run between $V_i$ and $V_j$. Suppose without loss of generality that $x,u\in V_i$ and $y,v\in V_j$. If $x=u$, say, then $c'(xy)=c'(uv)$ implies that $x-y=u-v$, so that $y=v$, contradicting our assumption that $xy$ and $uv$ are distinct edges. Therefore, $xy$ and $uv$ cannot share a vertex. By definition, there is no edge between $x$ and $u$, and there is no edge between $y$ and $v$.

It remains to show that neither $xv$, nor $yu$ can be an edge of color $c'$. Let $d=x-y=u-v$. Suppose, for example, that $xv$ is an edge of color $c'$. Then $x+v\in A$, and we have $$\frac{(x+y)+(u+v)}{2}=\frac{(2x-d)+(2v+d)}{2}=x+v.$$
Comparing the left-hand side and the right-hand side, it follows that $x+y,x+v,u+v$ are distinct numbers that form a 3-AP in $A$. However, the fourth coordinate of the color $c'(xy)=c'(uv)=c'$ guarantees that $c(x+y)=c(u+v)$. Thus, we have found a non-rainbow 3-AP in $A$, contradicting our assumptions. A symmetric argument shows that $yu$ cannot be an edge of color $c'$ either.
\smallskip

Let us count the number of edges of $G$. For every pair $(i,j), 1\leq i<j\leq m$, there are at least $m^2-m(2m)^{\alpha}>m^2-2m^{1+\alpha}$ edges between $V_i$ and $V_j$. Indeed, for every $s\in [2m]\setminus A$, there are at most $m$ pairs $(x,y)\in [m]^2$ such that $x+y=s$, and the number of such elements $s$ is at most $(2m)^\alpha$. Hence, we have $$|E(G)|\geq \binom{m}{2}(m^2-2m^{1+\alpha})\geq\binom{n}{2}-2n^{3/2+\alpha/2}.$$

The number of colors used by $c'$ and, therefore, the number of induced matchings $G$ can be partitioned into, is at most $m^2(2m)(2m)^{\beta}\leq 4n^{3/2+\beta/2}$. This completes the proof of Corollary \ref{cor:maincor}.    \hfill$\Box$

\section{Concluding remarks}

Our proof of Theorem \ref{thm:mainthm} can be easily extended to longer arithmetic progressions.

\begin{theorem}\label{general}
	For any positive integer $k$, there exist $\alpha,\beta>0$ with the following property. For every sufficiently large positive integer $n$, there are a set $A\subset [n]$ with $|A|\geq n-n^{\alpha}$ and a coloring of $A$ with at most $n^{\beta}$ colors such that every  arithmetic progression of length at most $k$ in $A$ is rainbow.

Moreover, for every $\beta > 0$, we can choose $\alpha < 1$ satisfying the above conditions.
\end{theorem}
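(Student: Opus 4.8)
The plan is to follow the proof of Theorem~\ref{thm:mainthm} almost verbatim, replacing the auxiliary coloring $f$ by a more elaborate one and using a routine extension of Claim~\ref{claim:circle}. I would keep all the parameters of Section~2: $\epsilon=C^{-3}$, $B=\{0,\dots,C-1\}^{d}$, the bijection $\phi\colon B\to[n]$, the spherical shell $S$ of radii $r(1\pm\epsilon)$, and the sets $A'=B\cap S$, $A=\phi(A')$. Claim~\ref{harom} is untouched, so again $|A|\ge n-n^{\alpha}$ with $\alpha=1-\frac{\epsilon^{2}}{30\log C}<1$. The first (easy) new ingredient is the geometric fact generalizing Claim~\ref{claim:circle}: if $\mathbf{x}_0,\dots,\mathbf{x}_{j-1}\in S$ form an arithmetic progression of vectors (that is, $\mathbf{x}_\ell=\mathbf{x}_0+\ell\mathbf{w}$) with $3\le j\le k$, then $|\mathbf{x}_0-\mathbf{x}_{j-1}|\le 2(k-1)\sqrt{\epsilon}\,r$. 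Indeed, Claim~\ref{claim:circle} applied to the consecutive triple $\mathbf{x}_0,\mathbf{x}_1,\mathbf{x}_2$ (whose middle term is the average of the other two, all of them lying in $S$) gives $2|\mathbf{w}|=|\mathbf{x}_0-\mathbf{x}_2|\le 4\sqrt{\epsilon}\,r$, and hence $|\mathbf{x}_0-\mathbf{x}_{j-1}|=(j-1)|\mathbf{w}|\le 2(k-1)\sqrt{\epsilon}\,r$.

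The crux is the auxiliary coloring $f$ of $B$. I want it to guarantee: \emph{whenever $f(\mathbf{x})=f(\mathbf{y})$, every arithmetic progression $a_0,\dots,a_{j-1}$ of $[n]$ of length $j\le k$ having $\phi(\mathbf{x})$ and $\phi(\mathbf{y})$ among its terms satisfies $\phi^{-1}(a_0),\dots,\phi^{-1}(a_{j-1})\in B$, and these vectors form an arithmetic progression.} To see what is needed, suppose $\phi(\mathbf{x}),\phi(\mathbf{y})$ occupy positions $i<i+g$ of such a progression; then necessarily $\phi^{-1}(a_\ell)=\mathbf{x}+\tfrac{\ell-i}{g}(\mathbf{y}-\mathbf{x})$ for all $\ell$, and since $\phi$ is coordinatewise affine on $B$ it suffices that every coordinate of every one of these vectors be an integer in $\{0,\dots,C-1\}$, the extreme multipliers being $\lambda=\tfrac{\ell-i}{g}$ with $|\lambda|\le k-1$. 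Accordingly I would define $f$ coordinatewise, $f(\mathbf{x})=(f_1(\mathbf{x}),\dots,f_d(\mathbf{x}))$ with $f_t$ depending only on $\mathbf{x}(t)$, and let $f_t(\mathbf{x})$ record two things: (i) the residue of $\mathbf{x}(t)$ modulo $L:=\operatorname{lcm}(1,2,\dots,k-1)$, which forces $g\mid(\mathbf{y}(t)-\mathbf{x}(t))$ for every candidate gap $g\le k-1$, so the coordinates above are integers; and (ii) a ``dyadic band'' of $\mathbf{x}(t)$, measured either upward from $0$ or downward from $C-1$, but now with ratio $\gamma=1+\tfrac{1}{2(k-1)}$ in place of $2$ — if $\mathbf{x}(t)$ and $\mathbf{y}(t)$ lie in a common band then $|\mathbf{x}(t)-\mathbf{y}(t)|$ is at most a $(\gamma-1)$-fraction of the band, which is small enough to keep $\mathbf{x}(t)+\lambda(\mathbf{y}(t)-\mathbf{x}(t))$ inside $\{0,\dots,C-1\}$ for all $|\lambda|\le k-1$. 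One checks, exactly as in the $k=3$ case, that this $f$ has the stated property, and that since $O_k(\log C)$ bands cover $\{0,\dots,C-1\}$ it uses at most $\big(L\cdot O_k(\log C)\big)^{d}=\big(O_k(\log C)\big)^{d}$ colors. Arranging that \emph{all} the divisions by $1,\dots,k-1$ and \emph{all} the $(k-1)$-fold extrapolations work simultaneously, while keeping the number of colors of the form $C^{o(1)}$ raised to the power $d$, is the step I expect to be the main obstacle; everything else is bookkeeping.

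With $f$ in hand I would finish as in Section~2. For each color class $V$ of $f$, build a graph $G$ on $V\cap A'$, joining $\mathbf{x},\mathbf{y}$ whenever some arithmetic progression of vectors of length $j\in\{3,\dots,k\}$, all of whose terms lie in $A'$, contains both $\mathbf{x}$ and $\mathbf{y}$. By the geometric fact above every neighbor of $\mathbf{x}$ is within $2(k-1)\sqrt{\epsilon}\,r<2(k-1)\sqrt{\epsilon d}\,C$ of $\mathbf{x}$, hence differs from $\mathbf{x}$ in fewer than $4(k-1)^{2}\epsilon dC^{2}$ coordinates, and the argument of Claim~\ref{otodik} gives $\Delta(G)<2^{d}C^{4(k-1)^{2}\epsilon dC^{2}}$. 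Properly color each $G$ with $\Delta(G)+1$ colors, using disjoint palettes for different $V$; together with $f$ this yields a coloring $c$ of $A$ with at most $\big(O_k(\log C)\cdot 2C^{4(k-1)^{2}\epsilon C^{2}}\big)^{d}$ colors, and because $\epsilon=C^{-3}$ the base is $C^{o(1)}$ as $C\to\infty$, so this is at most $n^{\beta}$ with $\beta=\beta(C)\to0$. Finally, if $a_0,\dots,a_{j-1}$ ($3\le j\le k$) were a progression in $A$ with $c(a_i)=c(a_{i'})$ for some $i\ne i'$, then $\mathbf{x}:=\phi^{-1}(a_i)$ and $\mathbf{y}:=\phi^{-1}(a_{i'})$ lie in one class $V$ of $f$; the defining property of $f$ makes $\phi^{-1}(a_0),\dots,\phi^{-1}(a_{j-1})$ an arithmetic progression of vectors contained in $A'$, so $\mathbf{x}$ and $\mathbf{y}$ are adjacent in $G$ and therefore received distinct colors — a contradiction. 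Thus every arithmetic progression of length at most $k$ in $A$ is rainbow (lengths $1$ and $2$ being vacuous), and letting $C\to\infty$, with the general $n$ handled as in Section~2, gives the ``moreover'' clause.
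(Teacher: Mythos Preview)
Your proposal is correct and follows essentially the same route as the paper: modify the auxiliary coloring $f$ so that each coordinate records its residue modulo $\operatorname{lcm}(1,\dots,k-1)$ (the paper uses $k!$) together with a band of ratio $1+\Theta(1/k)$ (the paper uses $k/(k-1)$), and extend Claim~\ref{claim:circle} to $k$-APs in $S$. The only cosmetic difference is that the paper records the $k$-independent bound $|\mathbf{x}_1-\mathbf{x}_k|\le 10\sqrt{\epsilon}\,r$ (obtainable by applying Claim~\ref{claim:circle} to the two endpoints and the middle term of the progression), whereas you derive the $k$-dependent bound $2(k-1)\sqrt{\epsilon}\,r$ from a consecutive triple; either suffices.
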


In order to establish Theorem~\ref{general}, we need to modify the proof of Theorem \ref{thm:mainthm} at the following two points.

\begin{enumerate}
	\item We should construct an auxiliary coloring $f$ on $B$ such that if $f(\mathbf{x})=f(\mathbf{y})$, then $\frac{p}{q}\mathbf{x}+(1-\frac{p}{q})\mathbf{y}\in B$ for every $p,q\in [k]$. Color $(x_1,\dots,x_d)$ with the color $(a_1,\dots,a_d,b_1,\dots,b_d)$, where $a_{i}\in \{0,\dots,k!-1\}$ such that $a_i\equiv x_i \mod k!$, and 
	 $$b_{i}=\begin{cases}
	j &\mbox{if }\, \mathbf{x}(i)\leq \frac{C}{2}\mbox{ and }\, (\frac{k}{k-1})^{j-1}-1\leq \mathbf{x}(i)<(\frac{k}{k-1})^{j}-1,\\
	-j &\mbox{if }\, \mathbf{x}(i)>\frac{C}{2}\mbox{ and }\, (\frac{k}{k-1})^{j-1}-1\leq C-1-\mathbf{x}(i)< (\frac{k}{k-1})^{j}-1.
	\end{cases}$$
	Then $f$ uses $(k^{k}\log C)^{O(d)}$ colors.
	
	\item  Instead of Claim \ref{claim:circle}, we can show that if $\mathbf{x}_1,\dots,\mathbf{x}_k$ is a $k$-term arithmetic progression in $S$, then $|\mathbf{x}_1-\mathbf{x}_k|\leq 10 \sqrt{\epsilon} r$.
\end{enumerate}

After these changes, the proof can be completed by straightforward calculations, in the same way as in the case of Theorem~\ref{thm:mainthm}.

\section{Acknowledgments}

We would like to thank Benny Sudakov for valuable discussions.

\end{document}